\newtheorem{thm}{Theorem}[section]
\newtheorem{lemma}[thm]{Lemma}
\begin{document}

\date{}
\title{A fake Klein bottle with bubble}
\author{W.H. Mannan}

%\footnote{Queen Mary University of London, School of Mathematical Sciences.}

\maketitle

\vspace{-9mm}
\begin{abstract}{We resolve the question of the existence of a finite 2-complex with the same fundamental group and Euler characteristic as a Klein bottle with a bubble, but homotopically distinct to it.}
\end{abstract}

\bigskip

{\tiny\noindent {\bf MSC classes} Primary: 57M05, 57K20.  Secondary: 20C07, 16S34, 20C10, 55P15, 55N25

\noindent {\bf Keywords:} 2-complex, Klein bottle, group presentation, homotopy equivalence, homotopy group, Wall's D(2) problem}

\bigskip
{\tiny\noindent {\bf Acknowledgments} Thanks to Johnny Nicholson for productive discussions while attending``A Festival Remembering Victor Snaith", supported by The Heilbronn Institute, The London Mathematical Society and the Snaith family.  Thanks also to the referee of the published version, for pointing out some relevant and exciting recent work in geometric group theory.}

\section{Introduction}

Among the first homotopy invariants that students of topology are typically introduced to are the Euler characteristic and fundamental group.  Among the first classes of spaces such students may study (after finite graphs)  will often be finite 2-complexes.  As such it is not uncommon for the more adept students to question if these two invariants can distinguish between all 
homotopy classes of finite 2-complexes.

It has long been known that they cannot \cite[\S1.7]{Jens}.  Lustig \cite{Lust} produced infinitely many finite 2-complexes with the same Euler characteristic  and fundamental group, but pairwise distinct second homotopy modules.  Harlander and Jensen did this in the case where the fundamental group is the trefoil group \cite{Jens2}, building on the initial examples of Dunwoody and Berridge \cite{Berr,Dunw}.  See \cite{Nich} for more recent examples.  For finite fundamental groups, homotopically distinct finite 2-complexes with the same Euler characteristic  were found by Metzler for fundamental group ${C_5}^3$ \cite[p.105]{Metz, Lati1}.  Recently the present author and Popiel did this for fundamental group $Q_{28}$ \cite{Mann3}.

None of these spaces are easy to visualise.  Nor is it easy for a novice to see exactly what property distinguishes the spaces in question.  A space that {\it is} easy to visualise is a Klein bottle with a bubble $K^\circ$: formally, take a Klein bottle $K$, remove a small disk and replace it with a $2$-sphere, identifying the equator of the $2$-sphere with the boundary of the removed the disk.  Note $K^\circ$ is homotopy equivalent to the wedge $K\vee S^2$.  For aesthetic reasons, we prefer to work with $K^\circ$.

The finite $2$-complex $K^\circ$ has an easily visualised homotopy invariant property.  Its universal cover is the plane, with standard square tiling by translates of a fundamental cell, and a ``bubble" in each square (e.g. small disk removed and 2-sphere inserted). 
See Figure \ref{bubbly} below.

\begin{figure}[H]
\centering
\includegraphics[scale=.4]{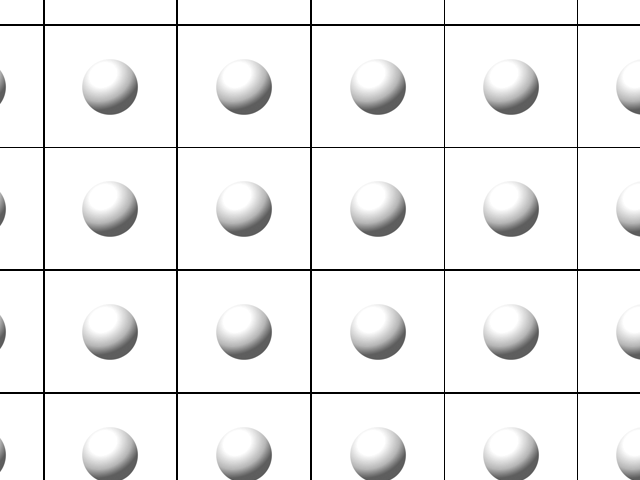}
\caption{The universal cover of $K^\circ$}
\label{bubbly}
\end{figure}

Any bubble can be translated to any other bubble by a unique element of the covering group.  Collectively, the bubbles generate the second homology group $H_2(\widetilde{K^\circ})$ over $\mathbb{Z}$.  Thus $H_2(\widetilde{K^\circ})$ is freely generated by a single element, as a module over $\mathbb{Z}[\pi_1(K^\circ)]$.

Motivated by the work of Harlander and his student Misseldine  \cite{Harl}, \cite[\S1.8]{Jens}, an open question in low dimensional topology for the last 14 years has been: Is there a finite 2-complex, $X$ with the same fundamental group and Euler characteristic as $K^\circ$, but with $H_2(\widetilde{X})$ not freely generated as a $\mathbb{Z}[\pi_1(X)]$ module?  We resolve this:

\bigskip
\noindent{\bf Theorem A.} {There exists a finite 2-complex $X$, with $\pi_1(X)\cong\pi_1(K^\circ)$ and $\chi(X)=\chi(K^\circ)$, but with $H_2(\widetilde{X})$ not freely generated as a $\mathbb{Z}[\pi_1(X)]$ module. }

\bigskip
Many questions regarding the existence of a finite 2-complex with specified (homotopy invariant) properties
arise from the discovery of a finite $D(2)$ complex with those properties (a $D(n)$ complex is one with no cohomology - twisted or otherwise - above dimension $n$).  In the present case Harlander and Misseldine discovered a finite $D(2)$ complex satisfying the conditions of Theorem A \cite{Harl}.

The motivation for these questions is Wall's $D(2)$ problem.  In 1965 Wall showed  that for $n>2$, every finite $D(n)$ complex is homotopy equivalent to a finite $n$-complex \cite{Wall}.  Swan and Stallings subsequently verified this when $n=1$ \cite{Stal, Swan1}.  The case $n=2$ remains one of the major open questions in low dimensional topology, known as Wall's $D(2)$ problem: Is there a finite $D(2)$ complex, which is not homotopy equivalent to a finite 2-complex?   If a finite $D(2)$-complex is found with homotopy invariant properties not possessed by any finite 2-complex, Wall's $D(2)$ problem would be solved.

It had been hoped that Theorem A would be false, (so $H_2(\widetilde{X})$ would always be free for a finite 2-complex, sharing fundamental group and Euler characteristic with $K^\circ$), meaning that Harlander and Misseldine's $D(2)$ complex would resolve the $D(2)$ problem.  The hope that $H_2(\widetilde{X})$ is always free came from the fact that unlike other one-relator groups (such as the trefoil group) the relation modules for the Klein bottle group are indeed free (see \cite[Theorem 1.15]{Jens} and \cite{Lars}).  Theorem A shows that this approach does not suffice.

More generally, given a surface $Y$ and integer $r$, one could ask if there is a finite 2-complex with the same fundamental group and Euler charecteristic as $Y\vee \vee_{i=1}^r S^2$, but with non-isomorphic second homotopy module.  If $Y$ is a torus then the Quillen-Suslin theorem tells us that this cannot happen, as the second homotopy module would be stably free, hence free. 

The finite $2$-complex $X$ of Theorem A is the presentation complex of a 2 generator, 2 relation presentation.  From Schanuel's lemma we may conclude that $H_2(\widetilde{X})\oplus  \mathbb{Z}[\pi_1(X)]\cong \mathbb{Z}[\pi_1(X)]^2$.  We thank the referee for drawing attention to the recent beautiful result, that for $Y$ a closed surface of genus greater than $e^{1000000}$, there are no non-free modules $N$, satisfying $N\oplus  \mathbb{Z}[\pi_1(Y)]\cong \mathbb{Z}[\pi_1(Y)]^2$ \cite[Corollary 4]{Avra}.  Thus for hyperbolic surfaces of high genus, there is no immediate analogue of our construction for the Klein bottle  \cite[Theorem 6]{Avra}.  The possibility of  stably free modules with larger minimal generating sets has not been ruled out.  See \cite{Avra2} for further developments in this direction.

\section{Presentations of the Klein bottle group}\label{pressec}

A finite presentation of a group $G$ consists of a set of generators $g_1,\cdots,g_n$, and a set of relators $R_1,\cdots,R_m$: words in the generators that represent the trivial element of the group.  The $g_i$ must generate $G$, and the $R_j$ must normally generate the subgroup of relators in the free group on the $g_i$.  In this case we write:
\begin{eqnarray*}
\mathcal{P}= \langle g_1,\cdots,g_n\,\vert\,R_1,\cdots,R_m\rangle
\end{eqnarray*} 
is a presentation for $G$.

The Cayley complex $X_\mathcal{P}$ of a finite presentation $\mathcal{P}$ is the union of a wedge (over a point) of directed loops, indexed by the generators of $\mathcal{P}$, with a set of disks indexed by the relators of $\mathcal{P}$, where the boundary of the disk indexed by relator $R_j$ is identified with the composition of loops which $R_j$ expresses (reading right to left).

By Van Kampen's theorem, if $\mathcal{P}$ presents a group $G$, then $\pi_1(X_\mathcal{P})=G$.  Now let:
\begin{eqnarray*}
\mathcal{P}= \langle x,y\,\vert\,y^{-1}xyx\rangle
\end{eqnarray*}
We have $X_\mathcal{P}$ is the Klein bottle $K$ (see Figure \ref{Klein_Cayley}), so $\mathcal{P}$ presents the fundamental group of the Klein bottle $\pi_1(K)$.

\begin{figure}[H]
\centering
\includegraphics[scale=.35]{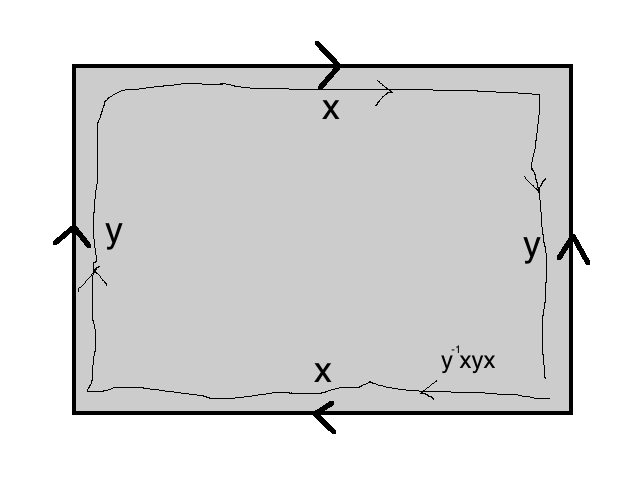}
\caption{The Klein bottle as a Cayley complex}
\label{Klein_Cayley}
\end{figure}

In this section we will show that:
\begin{eqnarray*}
\mathcal{Q}= \langle x,y\,\vert\,\,y^{-2}xy^2x^{-1},\,\, x^{-3}(y^{-1}xy)x^2(y^{-1}x^{-2}y) \rangle
\end{eqnarray*}
also presents $\pi_1(K)$.  By Van Kampen's theorem, we know that attaching a disk to a finite complex $X$, by identifying the boundary of the disk with a contractible loop in $X$, does not alter the fundamental group of $X$.  Thus we will be able to conclude that: $$\pi_1(X_\mathcal{Q})\cong\pi_1(K)\cong\pi_1(K^\circ).$$

Also note that $\chi(X_\mathcal{Q})$ is the number of 2-cells (disks) in $\mathcal{Q}$ minus the number of 1-cells  (loops) plus the number of 0-cells (points) which is \newline $2-2+1=1$.  Performing the same calculation for $K=X_\mathcal{P}$ gives $\chi(K)=1-2+1=0$.  Finally attaching a single 2-cell to get $K^\circ$ we conclude $\chi(K^\circ)=\chi(K)+1=1$, so:$$\chi(X_\mathcal{Q})=1=\chi(K^\circ).$$

That is, once we have shown that $\mathcal{P}$ and $\mathcal{Q}$ present the same group, we will be able to conclude that $X_\mathcal{Q}$ and $K^\circ$ have the same fundamental group and Euler characteristic.  In the next section we will show that $H_2(\widetilde{X_\mathcal{Q}})$ is not freely generated by a single element over $\mathbb{Z}[\pi_1(K)]$, thus proving Theorem A.

For the rest of this section we focus on showing that $\mathcal{Q}$ presents the same group as $\mathcal{P}$.  At first sight, the relators in  $\mathcal{Q}$ look long and complicated.  However the first relator merely tells us that $y^2$ commutes with $x$.  This is clearly true in the group presented by $\mathcal{P}$ (conjugating $x$ by $y$ twice  takes $x\mapsto x^{-1}\mapsto x$).  Let $a=y^{-1}xy, b=x$.  The second relation of $\mathcal{Q}$ then merely states that $b^{-3}ab^2a^{-2}$ is trivial, or equivalently: \begin{eqnarray}\label{abrel1}
ab^2=b^3a^2.\end{eqnarray}
Again this holds in the group presented by $\mathcal{P}$, as there we have $ab=1$.  To show that $\mathcal{P}$ and $\mathcal{Q}$ present the same group, it remains only to show that $ab=1$ holds in the group presented by $\mathcal{Q}$.  We know by definition that $y^{-1}by=a$ holds in this group.  From the first relator we then know that $$y^{-1}ay=y^{-2}xy^{2}=x=b.$$
Thus conjugating both sides of (\ref{abrel1}) by $y$, we merely interchange the $a$'s and $b$'s, to get:\begin{eqnarray}\label{abrel2} 
ba^2=a^3b^2.\end{eqnarray}

To complete our proof that $ab=1$ we use (\ref{abrel1}) and (\ref{abrel2}) and invoke  \cite[Lemma 3.1]{Mann3}.  For completeness, we repeat the proof here.

\begin{lemma}\cite[Lemma 3.1]{Mann3} \label{abr=3}
For a group $G$, let $a,b \in G$ satisfy (\ref{abrel1}), (\ref{abrel2}). Then $ba=1$.
\end{lemma}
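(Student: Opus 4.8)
The plan is to treat equations~(\ref{abrel1}) and~(\ref{abrel2}) as a rewriting system and chase consequences until I can force $ba=1$. The symmetry of the two relations under swapping $a\leftrightarrow b$ is the key structural feature, so I expect the argument to come down to deriving one ``mixed'' identity from~(\ref{abrel1}), deriving its $a\leftrightarrow b$ mirror image from~(\ref{abrel2}), and then combining the two. First I would rewrite~(\ref{abrel1}) as $a^{-1}b^3a^2=b^2$, i.e. $a^{-1}b^3a = b^2 a^{-1}$, which expresses conjugation of a power of $b$ by $a$ in a controlled way; symmetrically from~(\ref{abrel2}) I get $b^{-1}a^3b = a^2 b^{-1}$.

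Next I would try to bootstrap these into a statement that some power of $a$ equals some power of $b$, or that $ab$ is central, which together with the relations should collapse everything. Concretely: from $ab^2=b^3a^2$ I would substitute the relation into itself — for instance compute $ab^2\cdot b^2 = ab^4$ two ways using $b^2 = a^{-1}b^3a^2$ repeatedly — to get a relation of the form $a b^{2k} = b^{\,?} a^{\,?}$ with exponents I can track by an easy induction; the mirror computation does the same for $a$-powers. The hoped-for outcome is an identity like $ab^m = b^n a^m$ or an equality of the normal forms that, once intersected with its mirror, pins down $ab$. I would also keep in mind the abelianization: in $G^{\mathrm{ab}}$ equation~(\ref{abrel1}) gives $a+2b = 3b+2a$, i.e. $a+b=0$, so $ab=1$ already holds mod the commutator subgroup — this tells me the target identity is not obstructed abelianly and suggests I should be looking to show $[a,b]=1$ and then read off $ab=1$ from $a+b=0$.

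So the concrete route I would try is: (i) from~(\ref{abrel1}), (\ref{abrel2}) derive that $a$ and $b$ commute; (ii) once $[a,b]=1$, equation~(\ref{abrel1}) becomes $ab^2 = a^2b^3$, hence $1 = ab$, done. For step~(i), with $c=ab$ I would rewrite~(\ref{abrel1}) as $ab\cdot b = b\cdot ba\cdot ab$, trying to express things in terms of $c$ and one other element and show the ``other element'' drops out; the $a\leftrightarrow b$ symmetry means whatever relation I extract has a partner, and the pair should be rigid enough to force commutativity.

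\textbf{Main obstacle.} The hard part will be step~(i): extracting commutativity of $a$ and $b$ from two relations that individually describe quite twisted conjugation behaviour. The exponent bookkeeping in the iterated substitutions is where an argument can silently go wrong, and it is not obvious a priori that~(\ref{abrel1}) and~(\ref{abrel2}) together are strong enough — a priori they might define a larger group in which $ba\neq1$, so the proof must genuinely use both relations and their interaction, not just one. I would watch for a slick shortcut: perhaps showing directly that $ab$ and $ba$ are conjugate and simultaneously that one of them is trivial in a quotient where the other survives, or identifying a short word that the relations force to be both a power of $a$ and a power of $b$.
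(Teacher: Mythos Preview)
Your proposal is a strategy outline, not a proof: you identify ``show $[a,b]=1$'' as step~(i) and then immediately flag it as the main obstacle without supplying the argument. Everything hinges on that step, and nothing you wrote actually establishes it. The induction-on-exponents idea (tracking $ab^{2k}$ versus its mirror) is plausible-sounding but you have not shown it terminates in a useful identity, and the substitution $c=ab$ is left as a suggestion rather than carried out. As written there is a genuine gap at exactly the point you yourself highlight.

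The paper's proof bypasses commutativity entirely and is a two-line computation. Multiply (\ref{abrel1}) on the left by $a^2$: this gives $a^{3}b^{2}=a^{2}b^{3}a^{2}$. But the left side is exactly the right side of (\ref{abrel2}), so $a^{2}b^{3}a^{2}=ba^{2}$, hence $a^{2}b^{3}=b$ and $a^{2}b^{2}=1$. Feeding $b^{2}=a^{-2}$ back into (\ref{abrel2}) gives $ba^{2}=a$, i.e.\ $ba=a^{-1}$, and feeding $a^{2}=b^{-2}$ into (\ref{abrel1}) gives $ab=1$ directly. No need to prove $[a,b]=1$ as a separate lemma: the relation $a=b^{-1}$ drops out immediately, and commutativity is a \emph{consequence}, not a stepping stone.

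The moral: your instinct to exploit the $a\leftrightarrow b$ symmetry is right, but the efficient way to do it is to arrange one relation so that its left side literally matches a piece of the other, rather than to aim first for a structural fact like commutativity.
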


\begin{proof}
Multiplying (\ref{abrel1}) through by $a^2$ on the left we get: $$a^2b^3a^2=a^3b^2=ba^2,$$
from (\ref{abrel2}).  Thus $a^2b^2=1$ so $b^2a^2=1$ and (\ref{abrel2}) reduces to $b^{-1}=a$.
\end{proof}

Thus $\mathcal{P}$ and $\mathcal{Q}$ present the same group and we may conclude:

\begin{lemma} \label{sameinv}
We have $\chi(X_\mathcal{Q})=\chi(K^\circ)$ and  $\pi_1(X_\mathcal{Q}))\cong\pi_1(K^\circ).$
\end{lemma}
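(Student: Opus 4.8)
The plan is to assemble the pieces already in place in this section into a single statement. The Euler characteristic half is immediate and was in effect computed above: $X_\mathcal{Q}$ is the presentation complex of a presentation with one $0$-cell, two $1$-cells (for $x,y$) and two $2$-cells (for the two relators of $\mathcal{Q}$), so $\chi(X_\mathcal{Q}) = 1 - 2 + 2 = 1$; on the other side $K$ is a closed surface with $\chi(K)=0$, and passing to $K^\circ$ removes an open disk and glues in a $2$-sphere along its equator, which up to homotopy is $K\vee S^2$ and has the effect of adding a single $2$-cell, so $\chi(K^\circ) = \chi(K) + 1 = 1$. Hence $\chi(X_\mathcal{Q}) = \chi(K^\circ)$, and I would simply record this.

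For the fundamental group, I would first reduce to comparing the two group presentations. By Van Kampen's theorem $\pi_1(X_\mathcal{Q})$ is the group $G_\mathcal{Q}$ presented by $\mathcal{Q}$, while $\pi_1(X_\mathcal{P}) = \pi_1(K)$ is the group $G_\mathcal{P}$ presented by $\mathcal{P}$; and $\pi_1(K^\circ)\cong\pi_1(K)$ since forming $K^\circ$ attaches a $2$-cell along a nullhomotopic loop (equivalently $K^\circ\simeq K\vee S^2$), which leaves $\pi_1$ unchanged. So it suffices to prove $G_\mathcal{P}\cong G_\mathcal{Q}$ via the map sending the generators $x,y$ to $x,y$.

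I would establish this by exhibiting mutually inverse homomorphisms fixing $x$ and $y$. To get a homomorphism $G_\mathcal{P}\to G_\mathcal{Q}$, I need the single relator of $\mathcal{P}$ — equivalently $ab=1$, with $a = y^{-1}xy$, $b=x$ — to hold in $G_\mathcal{Q}$; this is exactly what the computations above supply: the first relator of $\mathcal{Q}$ forces $y^2$ to commute with $x$, hence $y^{-1}ay = y^{-2}xy^2 = x = b$, so conjugating relation (\ref{abrel1}) (the second relator of $\mathcal{Q}$) by $y$ swaps $a$ and $b$ and yields (\ref{abrel2}), and Lemma \ref{abr=3} then gives $ba=1$, hence $ab=1$. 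To get a homomorphism $G_\mathcal{Q}\to G_\mathcal{P}$, I need the two relators of $\mathcal{Q}$ to hold in $G_\mathcal{P}=\pi_1(K)$, where $y^{-1}xy = x^{-1}$: then $y^2$ commutes with $x$ (first relator holds), and $a = x^{-1} = b^{-1}$, so (\ref{abrel1}) reads $b=b$ (second relator holds). Each composite fixes $x$ and $y$, hence is the identity, so the two maps are mutually inverse isomorphisms and $G_\mathcal{P}\cong G_\mathcal{Q}$.

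There is no genuine obstacle left: the one substantive step, deducing $ab=1$ from (\ref{abrel1}) and (\ref{abrel2}), is Lemma \ref{abr=3}, already proved. The only care needed is bookkeeping — checking that the stated relators of $\mathcal{Q}$ really do translate into ``$y^2$ commutes with $x$'' and into (\ref{abrel1}) (noting $y^{-1}x^{-2}y = a^{-2}$), and keeping the directions of the induced maps straight so that one obtains an isomorphism rather than merely a surjection each way.
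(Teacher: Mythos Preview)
Your proposal is correct and follows essentially the same route as the paper: the Euler characteristic computation is identical, and the fundamental group argument is exactly the one the paper assembles in \S\ref{pressec}---verifying that the relators of $\mathcal{Q}$ hold in $G_\mathcal{P}$, that the relator of $\mathcal{P}$ holds in $G_\mathcal{Q}$ via Lemma~\ref{abr=3}, and that $\pi_1(K^\circ)\cong\pi_1(K)$ since the extra $2$-cell is attached along a nullhomotopic loop. Your framing in terms of mutually inverse homomorphisms fixing the generators is a slightly more explicit packaging of the same content, but there is no substantive difference.
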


\section {Non-freeness of $H_2(\widetilde{X_\mathcal{Q}})$ }

Consider the cellular decomposition induced on the universal cover $\widetilde{X_\mathcal{P}}$.  This is a standard square tiling of the plane (see Figure \ref{Square_Tiling}).

\begin{figure}[H]
\centering
\includegraphics[scale=.7]{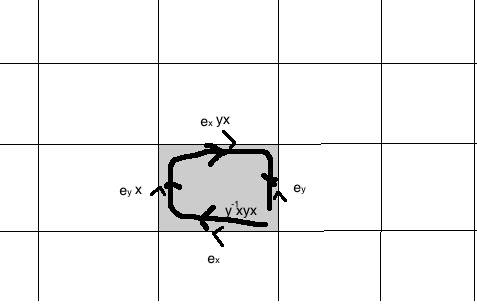}
\caption{Cellular decomposition lifted to universal cover of Klein bottle}
\label{Square_Tiling}
\end{figure}

As the plane is contractible, the resulting cellular chain complex gives us an exact sequence of (right) modules over $\mathbb{Z}[\pi_1(K)]$:
$$
0\to C_2(\widetilde{X_\mathcal{P}})\stackrel {d_2} \to C_1(\widetilde{X_\mathcal{P}})\stackrel {d_1} \to C_0(\widetilde{X_\mathcal{P}})\stackrel {} \to\mathbb{Z}\to 0
$$

Here $C_2(\widetilde{X_\mathcal{P}})$ is freely generated by a 2-cell $D$ indexed by the relator $y^{-1}xyx$.  Then $d_2$ is injective and exactness at $C_1(\widetilde{X_\mathcal{P}})$ tells us that $d_2(D)$ freely generates the kernel of $d_1$.

Now consider the cellular decomposition induced on the universal cover $\widetilde{X_\mathcal{Q}}$.  As $\widetilde{X_\mathcal{Q}}$ is simply connected, from its cellular chain complex we obtain the following exact sequence of (right) modules over $\mathbb{Z}[\pi_1(K)]$:

\begin{eqnarray}\label{Qchains}
0 \to H_2(\widetilde{X_\mathcal{Q}})\to C_2(\widetilde{X_\mathcal{Q}})\stackrel {d_2'} \to C_1(\widetilde{X_\mathcal{Q}})\stackrel {d_1'} \to C_0(\widetilde{X_\mathcal{Q}})\stackrel {} \to\mathbb{Z}\to 0
\end{eqnarray}

As $\mathcal{P}$ and $\mathcal{Q}$ have the same generating set, we can identify (\ref{Qchains})with:
\begin{eqnarray} \label{Pchains}
0 \to H_2(\widetilde{X_\mathcal{Q}})\to C_2(\widetilde{X_\mathcal{Q}})\stackrel {d_2'} \to C_1(\widetilde{X_\mathcal{P}})\stackrel {d_1} \to C_0(\widetilde{X_\mathcal{P}})\stackrel {} \to\mathbb{Z}\to 0
\end{eqnarray}

Here  $C_2(\widetilde{X_\mathcal{Q}})$ is freely generated over $\mathbb{Z}[\pi_1(K)]$ by 2-cells $D_1, D_2$ indexed by the relators of $\mathcal{Q}$.

In the previous section we noted that the relators of $\mathcal{Q}$ are trivial in the group presented by $\mathcal{P}$, where  $y^{-1}xyx$ is the only relator.  In fact we can express the relators of $\mathcal{Q}$ as products of conjugates of $y^{-1}xyx$.  

\begin{lemma} \label{prodconj}The following identities hold in the free group on $x,y$:

\begin{eqnarray*}
y^{-2}xy^2x^{-1}&=&(y^{-1}(y^{-1}xyx)y)(x(y^{-1}xyx)^{-1}x^{-1}),
\\
x^{-3}y^{-1}xyx^2y^{-1}x^{-2}y&=&(x^{-3}(y^{-1}xyx)x^3)(x^{-1}(y^{-1}xyx)^{-1}x)(y^{-1}xyx)^{-1}.
\end{eqnarray*}
\end{lemma}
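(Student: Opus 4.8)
The plan is to verify both identities directly by multiplying out the right-hand sides in the free group on $x,y$ and cancelling. This is a purely mechanical computation: each factor on the right is either a conjugate of the Klein bottle relator $w := y^{-1}xyx$ or its inverse $w^{-1} = x^{-1}y^{-1}x^{-1}y$, and one expands the product word, performs all free reductions, and checks that the result matches the stated left-hand side. Since the statement is an identity in a free group, there is nothing subtle: two reduced words are equal iff they are literally the same string, so the ``proof'' is just the reduction.

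For the first identity, I would write out $\bigl(y^{-1}(y^{-1}xyx)y\bigr)\bigl(x(y^{-1}xyx)^{-1}x^{-1}\bigr) = (y^{-2}xyxy)(xx^{-1}y^{-1}x^{-1}yx^{-1})$ and cancel the adjacent $x$ and $x^{-1}$ at the seam, leaving $y^{-2}xyxy\cdot y^{-1}x^{-1}yx^{-1}$. Then the $y$ and $y^{-1}$ cancel, giving $y^{-2}xyx\cdot x^{-1}yx^{-1} = y^{-2}xy\cdot yx^{-1}$, and a further cancellation of $x$ against $x^{-1}$ is not available — instead one has $y^{-2}x y^2 x^{-1}$ after combining the two $y$'s, which is exactly the claimed left-hand side. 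For the second identity one proceeds identically with the three factors $x^{-3}w x^3$, $x^{-1}w^{-1}x$, and $w^{-1}$, expanding to $(x^{-3}y^{-1}xyx\cdot x^3)(x^{-1}\cdot x^{-1}y^{-1}x^{-1}y\cdot x)(x^{-1}y^{-1}x^{-1}y)$ and cancelling at each of the two seams; the bookkeeping is slightly longer but the nature of the argument is unchanged.

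Alternatively, and perhaps more cleanly for exposition, one can observe that the claimed decompositions are precisely what the Fox derivative / Reidemeister--Schreier rewriting produces: since Lemma~\ref{abr=3} and the surrounding discussion already establish that $\mathcal{Q}$ presents $\pi_1(K)$, each relator of $\mathcal{Q}$ lies in the normal closure of $w$ in the free group, and one simply exhibits the explicit conjugating elements. But since we want an identity rather than just membership in a normal subgroup, there is no shortcut around the free reduction, and I would present the bare computation.

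The only ``obstacle'' is clerical accuracy: keeping track of signs and making sure every cancellation at a seam between conjugating factors is performed correctly, since an error of a single letter would break the identity. To guard against this I would reduce each side independently to its normal form and compare, rather than trying to transform one side into the other in place. Once both identities are confirmed as free-group equalities, they hold a fortiori in any quotient, which is what is needed for the subsequent analysis of $d_2'$ in the chain complex~(\ref{Pchains}).
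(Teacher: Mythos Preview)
Your approach is correct and is exactly the paper's own proof: the paper simply says ``Multiply out the right hand sides, and they will reduce to the left hand sides,'' which is precisely the free-group reduction you carry out (and your reductions check). The extra remarks about Fox derivatives and Reidemeister--Schreier rewriting are not needed, but they do no harm.
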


\begin{proof}
Multiply out the right hand sides, and they will reduce to the left hand sides.
\end{proof}

Conjugating a relator by a word representing the element $g$ of the presented group corresponds to translating the boundary of its associated 2-cell by $g$ (see Figure \ref{Conj_Action}).  Multiplying two relators results in a relator whose associated 2-cell has boundary the sum of the boundaries of the 2-cells associated to the two relators.

\begin{figure}[H]
\centering
\includegraphics[scale=.564]{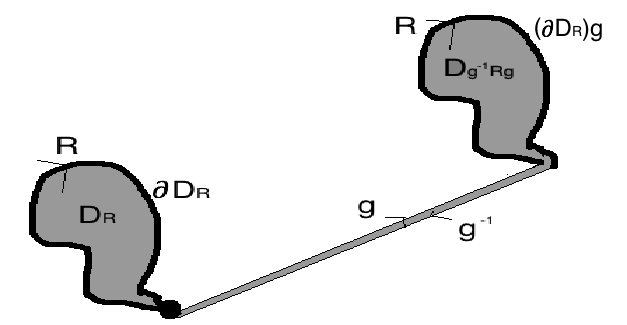}
\caption{Conjugation action on relator / boundary of associated 2-cell}
\label{Conj_Action}
\end{figure}

Thus from lemma \ref{prodconj} we conclude:\begin{eqnarray*}
d_2'(D_1)&=&d_2(D)(y-x^{-1})
\\
d_2'(D_2)&=&d_2(D)(x^3-x-1).
\end{eqnarray*}

\noindent We noted earlier that $d_2(D)$ freely generates the kernel of $d_1$.  Thus: 

\begin{lemma} \label{H2isker}
The module $H_2(\widetilde{X_\mathcal{Q}})$ is precisely the kernel of the module homomorphism  $\psi\colon \mathbb{Z}[\pi_1(K)]^2\to \mathbb{Z}[\pi_1(K)]$, mapping $$\left(\begin{array}{c} u \\ v\end{array}\right) \mapsto  (y-x^{-1})u+ (x^3-x-1) v.$$  
 \end{lemma}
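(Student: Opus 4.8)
The plan is to read off $H_2(\widetilde{X_\mathcal{Q}})$ as $\ker d_2'$ — legitimate since $X_\mathcal{Q}$ has no cells above dimension $2$, so $H_2$ is the whole of $\ker d_2'$ with nothing to quotient by — and then to compute $\ker d_2'$ explicitly using the two formulas for $d_2'(D_i)$ recorded above, together with the fact, established on the $\mathcal{P}$-side, that $d_2(D)$ freely generates $\ker d_1$.

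First I would use that $C_2(\widetilde{X_\mathcal{Q}})$ is free on $D_1,D_2$ to identify it with $\mathbb{Z}[\pi_1(K)]^2$ via $(u,v)^{T}\leftrightarrow D_1u+D_2v$; this is exactly the identification already implicit in passing from (\ref{Qchains}) to (\ref{Pchains}), which is permitted because $\mathcal{P}$ and $\mathcal{Q}$ share the generating set $\{x,y\}$, so $C_1$, $C_0$ and $d_1$ literally agree for the two complexes. Then additivity of $d_2'$ and the computed values $d_2'(D_1)=d_2(D)(y-x^{-1})$, $d_2'(D_2)=d_2(D)(x^3-x-1)$ give $d_2'(D_1u+D_2v)=d_2(D)\bigl((y-x^{-1})u+(x^3-x-1)v\bigr)=d_2(D)\,\psi(u,v)$.

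Now I would invoke that $d_2(D)$ freely generates $\ker d_1\le C_1(\widetilde{X_\mathcal{P}})$; in particular the map $r\mapsto d_2(D)r$ from $\mathbb{Z}[\pi_1(K)]$ into $C_1(\widetilde{X_\mathcal{P}})$ is injective. Hence $d_2'(D_1u+D_2v)=0$ if and only if $\psi(u,v)=0$, so under the identification $C_2(\widetilde{X_\mathcal{Q}})\cong\mathbb{Z}[\pi_1(K)]^2$ we obtain $\ker d_2'=\ker\psi$, i.e. $H_2(\widetilde{X_\mathcal{Q}})=\ker\psi$. For surjectivity of $\psi$ I would use exactness of (\ref{Pchains}) at $C_1(\widetilde{X_\mathcal{P}})$: it gives $\mathrm{im}\,d_2'=\ker d_1=d_2(D)\cdot\mathbb{Z}[\pi_1(K)]$, so for every $r\in\mathbb{Z}[\pi_1(K)]$ the element $d_2(D)r$ equals $d_2'(D_1u+D_2v)=d_2(D)\psi(u,v)$ for some $u,v$, and injectivity of $r\mapsto d_2(D)r$ forces $r=\psi(u,v)$.

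I do not expect a genuine obstacle: the substantive work has been front-loaded into Lemma \ref{prodconj} (which supplies the formulas for $d_2'(D_i)$) and into the $\mathcal{P}$-computation that $d_2(D)$ freely generates $\ker d_1$. The only points needing care are the identification of the two chain complexes (harmless, since the generators coincide) and respecting the right-module convention, so that ``cancelling $d_2(D)$'' is justified by the injectivity of $r\mapsto d_2(D)r$ rather than by any spurious cancellation property of the group ring.
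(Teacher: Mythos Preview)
Your proposal is correct and follows exactly the approach of the paper: the paper does not give a separate formal proof of this lemma, but rather states it as an immediate consequence of the preceding computations (the formulas $d_2'(D_i)=d_2(D)(\cdots)$ from Lemma~\ref{prodconj} and the fact that $d_2(D)$ freely generates $\ker d_1$). You have simply spelled out the implicit details---the identification $C_2(\widetilde{X_\mathcal{Q}})\cong\mathbb{Z}[\pi_1(K)]^2$, the factorisation $d_2'=d_2(D)\cdot\psi$, and the use of injectivity of $r\mapsto d_2(D)r$ to cancel---more carefully than the paper does.
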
 

Also, from the exactness of (\ref{Pchains}) we know that $\psi$ is surjective. Explicitly we may write: $$
\psi\left(\begin{array}{c} -(x^4-x^2-x)y^{-1} \\ (x^{-3}-x^{-4}-y^{-1})\end{array}\right)=1.
$$

\noindent It remains to show that  $H_2(\widetilde{X_\mathcal{Q}})=\text{ker}\,\psi$ is not a free module over $\mathbb{Z}[\pi_1(K)]$.  In fact from lemma  \ref{H2isker} we know that: \begin{eqnarray}H_2(\widetilde{X_\mathcal{Q}})\oplus \mathbb{Z}[\pi_1(K)]\cong \mathbb{Z}[\pi_1(K)]^2 \label{stabfreeeqn}\end{eqnarray}
Thus if $H_2(\widetilde{X_\mathcal{Q}})$ were free, it must be generated freely by a single element.  Otherwise we would derive a contradiction of ranks, by tensoring both sides of (\ref{stabfreeeqn}) with $\mathbb{Z}$ over the ring homomorphism $\epsilon\colon \mathbb{Z}[\pi_1(K)]\to \mathbb{Z}$, mapping $g\mapsto 1$ for all $g\in \pi_1(K)$.

Let $S=\mathbb{Z}[\pi_1(K)]$.  Let $R=\mathbb{Z}[x,x^{-1}]\subset S$ be the ring of (finite) Laurent polynomials in $x$.  Then $R$ is a Noetherian domain and $S$ is the (skew) polynomial ring over $R$ with indeterminate $y$, where $y^{-1}xy=x^{-1}$.  

For any $\alpha(\neq 0)\in S$ we say the {\it degree} of $\alpha$, deg$(\alpha)$, is the greatest exponent of $y$ in $\alpha$ (with non-zero coefficient) minus the smallest.  Clearly ${\rm deg}(\alpha\beta)={\rm deg}(\alpha)+{\rm deg}(\beta)$. Thus there are no (left or right) zero divisors in $S$.

For any $\alpha\in S$ we say $\alpha$ is {\it monic} precisely when the largest exponent of $y$ with non-zero coefficient in $\alpha$ is $1\in R$.

\begin{lemma} \label{isotoideal}
We have an isomorphism of $S$-modules: $$H_2(\widetilde{X_\mathcal{Q}})\stackrel{\sim}\to \{v\in S\vert\,\, (x^3-x-1)v\in (y-x^{-1})S\}=V,$$
mapping $\left(\begin{array}{c} u \\ v\end{array}\right) \mapsto v$.
\end{lemma}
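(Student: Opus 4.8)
The plan is to invoke Lemma~\ref{H2isker}, which identifies $H_2(\widetilde{X_\mathcal{Q}})$ with $\ker\psi\subseteq S^2$, and to show that the coordinate projection $p\colon S^2\to S$, $\binom{u}{v}\mapsto v$, restricts to an isomorphism of right $S$-modules $\ker\psi\xrightarrow{\ \sim\ }V$. Since $p$ is visibly a homomorphism of right $S$-modules, it suffices to verify three things: that $p$ carries $\ker\psi$ into $V$; that $p|_{\ker\psi}$ is onto $V$; and that $p|_{\ker\psi}$ is injective.

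The first two are immediate from the defining equation of $\ker\psi$. If $\binom{u}{v}\in\ker\psi$, then $(y-x^{-1})u+(x^3-x-1)v=0$, so $(x^3-x-1)v=(y-x^{-1})(-u)\in(y-x^{-1})S$ and hence $v\in V$; this shows $p$ maps $\ker\psi$ into $V$. Conversely, given $v\in V$, choose $w\in S$ with $(x^3-x-1)v=(y-x^{-1})w$; then $\binom{-w}{v}\in\ker\psi$ and $p\binom{-w}{v}=v$, so $p|_{\ker\psi}$ is surjective.

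The only step that needs a genuine argument is injectivity, which reduces to the claim that $y-x^{-1}$ is a (left) non-zero-divisor in $S$: if $\binom{u}{v}\in\ker\psi$ with $v=0$, then $(y-x^{-1})u=0$ and we must conclude $u=0$. For this I would show that $S=R[y,y^{-1},\sigma]$ is a domain. Writing a nonzero element of $S$ as $\sum_{i=m}^{n}r_iy^i$ with $r_m,r_n\neq 0$, the extreme-degree coefficients of a product of two nonzero elements are obtained from the products of the extreme-degree coefficients of the factors by applying a power of $\sigma$; since $R=\mathbb{Z}[x,x^{-1}]$ is a domain and $\sigma$ is an automorphism, these are nonzero, so the product is nonzero. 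Hence $y-x^{-1}\neq 0$ is not a zero-divisor, giving $u=0$ and injectivity. Injectivity also shows that the first coordinate of an element of $\ker\psi$ is determined by its second, so the inverse map $V\to\ker\psi$ is well defined. I expect this leading-coefficient argument for $S$ being a domain to be the only mildly technical point, and it is entirely standard; everything else is a formal manipulation of the description of $\ker\psi$ supplied by Lemma~\ref{H2isker}.
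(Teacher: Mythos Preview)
Your proof is correct and follows the same route as the paper: identify $H_2(\widetilde{X_\mathcal{Q}})$ with $\ker\psi$ via Lemma~\ref{H2isker}, check that the projection $\binom{u}{v}\mapsto v$ lands in $V$ and is surjective by unwinding the defining equation, and obtain injectivity from the fact that $y-x^{-1}$ is not a zero-divisor in $S$. The paper simply asserts this last fact, whereas you supply the standard leading-term argument that $S=R[y,y^{-1},\sigma]$ is a domain; this is a welcome addition but not a departure in approach.
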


\begin{proof}. Given $\left(\begin{array}{c} u \\ v\end{array}\right)\in\text{ker}\,\phi$, we know $ (y-x^{-1})u+ (x^3-x-1) v=0$, so $(x^3-x-1) v=-(y-x^{-1})u$ and $v\in V$, so the map is well defined.  Conversely given $v\in V$, by construction we have $u\in S$ with $(x^3-x-1) v=(y-x^{-1})u$, so $\left(\begin{array}{c} -u \\ v\end{array}\right)\in \text{ker}\,\phi$, and the map is surjective.  

The map is injective, as $(y-x^{-1})$ is not a zero-divisor in $S$.
\end{proof}

Note that $V\lhd S$ is a right ideal.  As both $y^2$ and elements of the form $r(y^{-1}ry)$ for $r\in R$ are central in $S$, we have: \begin{eqnarray*}
(x^3-x-1)(y^2-1)&=&(y^2-1)(x^3-x-1)\\&=&(y-x^{-1})(y+x)(x^3-x-1)\\ {\rm and} \hspace{60mm}&\,\,&\hspace{60mm}\\
(x^3-x-1)&\,&(x^{-3}-x^{-1}-1)(y-x^{-1})=\\(y-x^{-1})&\,&(x^3-x-1)(x^{-3}-x^{-1}-1),
\end{eqnarray*}
so \begin{eqnarray}y^2-1, (x^{-3}-x^{-1}-1)(y-x^{-1})\in V. \label{elementsin}\end{eqnarray}

\begin{thm} \label{notfreemod}
The module $H_2(\widetilde{X_\mathcal{Q}})$ is not free over $\mathbb{Z}[\pi_1(K)]$.
\end{thm}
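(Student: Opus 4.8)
The plan is to read Theorem~\ref{notfreemod} off Stafford's theorem (Theorem~\ref{Staff}), applied to the module $V$ identified in the preceding lemma. The ambient ring is already in the required shape: $S=\mathbb{Z}[\pi_1(K)]=R[y,y^{-1},\sigma]$ with $R=\mathbb{Z}[x,x^{-1}]$ a Noetherian domain and $\sigma\colon x\mapsto x^{-1}$ a ring automorphism. I take $r=x^3-x-1$ and $s=-x^{-1}$, so that $y+s=y-x^{-1}$ and Stafford's module $\{v\in S\mid rv\in(y+s)S\}$ is literally our $V$. It then remains only to verify conditions (i) and (ii) of Theorem~\ref{Staff}.

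Condition (i), that $rS+(y+s)S=S$, is exactly the surjectivity of $\psi$ recorded in Lemma~\ref{H2isker}: the image of $\psi$ is the right ideal $(y-x^{-1})S+(x^3-x-1)S$, and exactness of~(\ref{Pchains}) forces it to be all of $S$. So (i) needs no further work.

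Condition (ii) asks that $sr^\sigma\notin rR$, i.e.\ that $-x^{-1}\bigl(x^{-3}-x^{-1}-1\bigr)=-x^{-4}+x^{-2}+x^{-1}$ is not a multiple of $x^3-x-1$ in $R$. Since $x$ is a unit of $R$, I would clear denominators: divisibility here is equivalent to $x^3-x-1\mid x^3+x^2-1$ in $\mathbb{Z}[x]$, using that $x^3-x-1$ has unit constant term and is therefore coprime to the prime $x$ in the UFD $\mathbb{Z}[x]$, so that inverting $x$ buys nothing. Both polynomials are monic of degree $3$, so the only possible quotient is $1$; but $x^3+x^2-1\neq x^3-x-1$, so (ii) holds.

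With (i) and (ii) in hand, Theorem~\ref{Staff} shows $V$ is non-free (indeed stably free). Composing with the $S$-module isomorphism $H_2(\widetilde{X_\mathcal{Q}})\cong V$ from the preceding lemma yields Theorem~\ref{notfreemod}, and hence Theorem~A. The argument is essentially mechanical: the only places demanding care are matching the left/right scalar conventions and the sign $s=-x^{-1}$ so that our $V$ coincides verbatim with Stafford's, and the short divisibility computation in (ii) — a comparison of two explicit cubics, which is not a genuine obstacle.
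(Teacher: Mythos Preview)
Your proposal is correct and follows essentially the same route as the paper: you apply Stafford's theorem with $r=x^3-x-1$ and $s=-x^{-1}$, obtain (i) from the surjectivity of $\psi$ in Lemma~\ref{H2isker}, and reduce (ii) to the non-divisibility of $x^3+x^2-1$ by $x^3-x-1$. The only cosmetic difference is in the final arithmetic step: the paper subtracts the two cubics and argues that $x^3-x-1$ cannot divide $x^2+x$ by a length count in $\mathbb{Z}[x,x^{-1}]$, whereas you pass to $\mathbb{Z}[x]$ and compare degrees of monic cubics---both are one-line checks.
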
 \label{notfree}
\begin{proof} From lemma \ref{isotoideal} and (\ref{stabfreeeqn}) it suffices to show that $V$ is not a principal right ideal. From (\ref{elementsin}) we know ithat $V$ contains both a monic element and a degree $1$ element.  Thus if it were principal, any generator of $V$ would divide both elements (on the left), so is itself degree $1$ and can be chosen monic (by construction $V$ cannot contain degree $0$ elements) .  Thus we may write the generator $y+a$, with $a\in R$ and we have:
$$
(x^3-x-1)(y+a)\in (y-x^{-1})S,
$$
so $$
(x^3-x-1)(y+a)= (y-x^{-1})(x^{-3}-x^{-1}-1),
$$
equating coefficients on $y$.  Thus we would have $x^3-x-1$ dividing $x^3+x^2-1$ in the ring $\mathbb{Z}[x,x^{-1}]$.  However then, $x^3-x-1$  would also divide the difference between the cubics: $x^2+x$.  This is impossible, as $x^2+x$ has shorter length (difference between highest and lowest exponents on $x$) than $x^3-x-1$.
\end{proof}

\noindent Theorem A then follows from theorem \ref{notfreemod} and lemma \ref{sameinv}.

\bigskip
Note our argument is a special case of a general proof by Stafford \cite[Theorem 1.2 (Case (ii), $\delta=0$)]{Staff}.

%\begin{thm} \label{Staff}(Stafford, \cite[Theorem 1.2 (Case (ii), $\delta=0$)]{Staff} Let $R$ be a Noetherian domain, $\sigma\colon R\to R$ be a ring automorphism and $S=R[y,y^{-1},\sigma]$ be the ring of (finite) Laurent polynomials in $y$, with coefficients in $R$, where multiplication is determined by the identity $y^{-1}ry=r^\sigma,$
%for all $r\in R$.  Then the module $$V=\{v\in S\vert\,\, rv\in (y+s)S\}$$ is a non-free stably free module as long as $r,s\in R$ satisfy:\begin{itemize}
%\item[i)] $rS+(y+s)S=S$,
%\item[ii)] $sr^\sigma\notin rR$.
%\end{itemize} 
%\end{thm}
%
%The proof is an exercise in elementary ring theory.  Condition (ii) guarantees that $V$ does not contain any degree $1$ monic polynomials in $y$.  However it is easy to show that $V$ does contain degree $1$ polynomials in $y$, as well as monic polynomials in $y$.  Thus  a generator of $V$ must necessarily divide an element that was both monic and degree $1$, contradicting (ii).

\noindent W.H. Mannan,  \hfill {\it email}: \textit{wajid@mannan.info}

\noindent 125 Russell Road,

\noindent London,

\noindent SW19 1LN

\end{document}